  \numberwithin{equation}{section}
  \DeclareMathOperator{\Hom}{Hom}
  \DeclareMathOperator{\maps}{maps}
  \DeclareMathOperator*{\holim}{holim}
  \DeclareMathOperator*{\hofib}{hofib}
  \newtheorem{theorem}{Theorem}[section]
  \newtheorem*{theorem*}{Theorem}
  \newtheorem{proposition}[theorem]{Proposition}
  \newtheorem{lemma}[theorem]{Lemma}
  \newtheorem{corollary}[theorem]{Corollary}
  \newtheorem{conjecture}[theorem]{Conjecture}
  \theoremstyle{definition}
  \newtheorem{remark}[theorem]{Remark}
  \newtheorem{example}[theorem]{Example}
  \newtheorem{definition}[theorem]{Definition}
    \theoremstyle{definition}
    \newtheorem*{q*}{Q}
  \definecolor{link}{rgb}{0,0,0.5}
\newcommand{\T}{\mathcal{T}}
\newcommand{\A}{\mathcal{A}}
\newcommand{\R}{\mathbb{R}}
\renewcommand{\C}{\mathbb{C}}
\DeclareMathOperator{\embtr}{Emb_{TR}}
\DeclareMathOperator{\embl}{Emb_{Lag}}
\DeclareMathOperator{\gr}{{Gr}}
\DeclareMathOperator{\GL}{{GL}}
\DeclareMathOperator{\imm}{Imm}
\DeclareMathOperator{\immf}{Imm^f}
\DeclareMathOperator{\immfa}{Imm^f_{\A}}
\DeclareMathOperator{\emb}{Emb}
\DeclareMathOperator{\embfa}{Emb^f_{\A}}
\DeclareMathOperator{\bs}{bs}
\DeclareMathOperator{\man}{\mathsf{Man_m}}
\renewcommand{\top}{\mathsf{Top}}
\DeclareMathOperator{\disc}{\mathsf{Disc}_\infty}
\DeclareMathOperator{\psh}{\mathsf{PSh}}
\begin{document}

\title{An application of the $h$-principle to manifold calculus}
\author{Apurva Nakade}
\address{Department of Mathematics, University of Western Ontario, London, ON, Canada.}
\email{anakade@uwo.ca}
\urladdr{http://apurvanakade.github.io/}
\keywords{manifold calculus \and $h$-principle \and Lagrangian embeddings \and totally real embeddings}
\subjclass[2010]{55P65 (primary), 57N65 (secondary)}

\begin{abstract}
  	Manifold calculus is a form of functor calculus that analyzes contravariant functors from some categories of manifolds to topological spaces by providing \emph{analytic approximations} to them. In this paper, using the technique of the $h$-principle, we show that for a symplectic manifold $N$, the analytic approximation to the Lagrangian embeddings functor $\mathrm{Emb}_{\mathrm{Lag}}(-,N)$ is the totally real embeddings functor $\mathrm{Emb}_{\mathrm{TR}}(-,N)$.
    More generally, for subsets $\A$ of the $m$-plane Grassmannian bundle $\gr(m,TN)$ for which the $h$-principle holds for $\A$-directed embeddings, we prove the analyticity of the $\A$-directed embeddings functor $\emb_{\A}(-,N)$.
\end{abstract}

\maketitle

\section{Introduction}
A symplectic manifold is a smooth manifold, $N$, equipped with a closed, non-degenerate differential 2-form; see Section~\ref{sec:Lagrangian_embeddings} for details.
One very important invariant associated to $N$ is its Fukaya category whose objects are Lagrangian submanifolds of $N$.
The Fukaya category appears in the statement of the homological mirror symmetry conjecture of Kontsevich.
For this and other reasons, the study of the space of Lagrangian submanifolds of symplectic manifolds is of great interest in symplectic geometry.

One very important example of a symplectic manifold is the cotangent bundle of a smooth manifold.
Much of the study of the space of Lagrangian submanifolds of the cotangent bundle has centered on Arnold's Nearby Lagrangian Conjecture.
Recent attempts to prove this conjecture combine techniques from the classical theory of J-holomorphic curves with modern homotopy theoretic techniques~\cite{Kragh_Parametrized_Ring_Spectra_}.
A simplified version of this conjecture can be stated in the following way.
Let $L$ and $L'$ be closed, simply-connected, smooth manifolds of the same dimension.

\begin{conjecture}[Nearby Lagrangian Conjecture]
  The space of unparametrized Lagrangian embeddings of $L$ into $T^*L'$ is non-empty if and only if $L$ is diffeomorphic to $L'$, in which case it is contractible.
\end{conjecture}
It is known that if $L$ is a Lagrangian submanifold of $T^*L'$ then the natural projection map is a simple homotopy equivalence between $L$ and $L'$~\cite{Abouzaid_Nearby_Lagrangians_with_vanishing_Maslov_}, \cite{Kragh_Parametrized_Ring_Spectra_},
\cite{Abouzaid_Kragh_Simple_homotopy_equivalence_of_nearby_Lagrangians}.
The conjecture has been shown to be true for $L' = S^2$~\cite{Hind_Lagrangian_spheres_inS2xS2_},
\cite{Hind_Lagrangian_unknottedness_in_Stein_surfaces},
\cite{Hind_Pinsonnault_Wu_Symplectormophism_groups_of_non_compact_manifolds_}.
In this paper, we initiate a program to apply homotopy theoretic methods coming from manifold calculus to study embedding spaces coming from symplectic geometry.

Let $M$ and $N$ be smooth manifolds and let
$\emb(M,N)$ be the space of smooth embeddings of $M$ into $N$.
Manifold calculus, as first defined in \cite{Weiss_Embeddings_from_} and later reformulated in \cite{Pedro_Manifold_calculus_and_homotopy_sheaves}, studies contravariant homotopy functors on manifolds, such as $F(M) = \emb(M,N )$. To such a functor $F$ it associates a tower of fibrations
\begin{align*}
   \cdots \longrightarrow \T_k F(M) \longrightarrow \T_{k-1} F(M) \longrightarrow \cdots \longrightarrow \T_{1} F(M)
\end{align*}
where the approximation $\T_k F(M)$ is the homotopy limit of $F$ evaluated on the category of open subsets of $M$ diffeomorphic to $k$ or fewer copies of $\R^m$, where $m$ is the dimension of $M$.
This tower can be thought of as a ``Taylor tower'' of $F$.
Denote by $\T_\infty F(M)$ the homotopy inverse limit of $\T_k F(M)$.

There also exist compatible maps $F(M) \rightarrow \T_k F(M)$ which induces a map $F(M) \rightarrow \T_\infty F(M)$. When this map is a homotopy equivalence, one says that the Taylor tower converges to $F$ and the functor $F$ is \emph{analytic}.
A deep and important convergence result for the functor $F(M) = \emb(M,N)$ states the following.
\begin{theorem}
  [Goodwillie--Weiss \cite{Goodwillie_Weiss_Embeddings_from_}, Goodwillie--Klein \cite{Goodwillie_Klein_Multiple_disjunction_}]
  \label{thm:GWK-Introduction}
  The embeddings functor $\emb(M,N)$ is analytic if $\dim(N) - \dim(M) \ge 3$.
\end{theorem}

In Section~\ref{sec:Lagrangian_embeddings}, we prove that manifold calculus sees the ``flexible side'' of symplectic geometry that can be studied using the technique of the $h$-principle.
Let $(N,\omega)$ be a symplectic manifold and let $J$ be any almost complex structure on it which is compatible with $\omega$.
Let  $\embl(M,N)$ and $\embtr(M,N)$ be the spaces of Lagrangian and totally real embeddings, respectively, of $M$ into $N$ (see Section~\ref{sec:Lagrangian_embeddings} for definitions).
\begin{theorem}
  When $\dim(N) - \dim(M) \ge 3$, the analytic approximation of the Lagrangian embeddings functor $\embl(M,N)$ is weakly homotopy equivalent to the totally real embeddings functor $\embtr(M,N)$.
\end{theorem}

In general, the space of Lagrangian embeddings is not the same as the space of totally real embeddings.
As such, this result provides an example of a non-analytic functor.
The author is unaware of any other examples of non-analytic functors in the present literature on manifold calculus.

We prove this theorem using the technique of the $h$-principle for directed embeddings.
Let $\gr(m,TN)$ be the $m$-plane Grassmannian bundle over $N$ and let $\A$ be a subset of $\gr(m,TN)$.
An embedding $e:M \hookrightarrow N$ is called $\A$-directed if the image of the naturally induced map $\gr(m,e):M \rightarrow \gr(m,TN)$ lies in $\A$.
We say that the $h$-principle holds for $\A$-directed embeddings if an arbitrary embedding $e$ can be perturbed to get an $\A$-directed embedding, provided we are also given a tangential homotopy connecting $De$ to $\A$, where $De$ is the derivative of $e$; see Section~\ref{sec:directed_immersions_and_embeddings}.
In Section~\ref{sec:Main_theorems} we prove the following result.

\begin{theorem}
  \label{thm:mainThm-Introduction}
  If $\dim(N) - \dim(M) \ge 3$ and the $h$-principle holds for $\A$-directed embeddings, then the space $\emb_\A(M,N)$ is weakly homotopy equivalent to its analytic approximation $\T_\infty\emb_\A(M,N)$.
\end{theorem}

Theorem~\ref{thm:mainThm-Introduction} recovers the Theorem~\ref{thm:GWK-Introduction} of Goodwillie--Weiss, Goodwillie--Klein for  $\A = \gr(m,TN)$. So it can be seen as a generalization of their result.

In the case when $M = S^1$, the invariants coming from manifold calculus for $\emb(S^1,\R^n)$ have been shown to be related to finite type knot invariants~\cite{Sinha_Embedding_calculus_knot_invariants_are_of_finite_type}.
In higher dimensions, manifold calculus has been used to produce loop space structures on embedding spaces of discs~\cite{Boavida_Weiss_Spaces_of_smooth_embeddings_}, and for proving finiteness results about homotopy groups of automorphisms spaces of discs~\cite{Kupers_Some_finiteness_results_for_groups_of_automorphisms_of_manifolds}.
We hope to generalize these results to embedding spaces that arise from the $h$-principle.

\subsection{Notation and Conventions.} Throughout the paper we will work in the category of smooth manifolds without boundary.
The mapping spaces are endowed with the weak $ C^\infty$ topology.
$M$ and $N$ will denote smooth manifolds of dimensions $m$ and $n$ respectively.
Throughout the paper we will assume that $m < n$.
We will use the terms space and topological space interchangeably.
All categories are enriched over spaces and likewise for all the categorical constructions.

\subsection*{Acknowledgements.} The author would like to thank Nitu Kitchloo and Dan Christensen for several helpful discussions about this project, and the organizers and speakers of the Alpine Algebraic and Applied Topology Conference, where the author first learned about manifold calculus. The author would like to thank the anonymous referees for their valuable comments which helped to improve the manuscript.
This is a pre-print of an article published in \emph{Journal of Homotopy and Related Structures}. The final authenticated version is available at \cite{Nakade_An_application_of_the_h-principle_to_manifold_calculus}.

\section{Manifold Calculus}
\label{sec:Background_Manifold_Calculus}

We start by recalling basic definitions of manifold calculus from \cite{Pedro_Manifold_calculus_and_homotopy_sheaves}.
Denote by $\top$ the category of compactly generated Hausdorff spaces enriched over itself.
The category $\man$ is the $\top$-enriched category defined as
\begin{align*}
	Ob(\man)    & := \{ m \mbox{ dimensional smooth manifolds without boundary} \} \\
	{\man}(U,V) & :=  \emb(U,V)
\end{align*}
where $ \emb(U,V)$ is the space of embeddings $ U \hookrightarrow V$ topologized under the weak $ C^\infty$ topology \cite{book:Hirsch_Differential_Topology}.
\begin{definition}
  Define $\disc$ to be the full subcategory  of $\man$ consisting of manifolds diffeomorphic to a disjoint union of finitely many $\R^m$.
\end{definition}

The presheaf category $ \psh(\man)$ consisting of functors $\man^{\mathrm{op}} \rightarrow \top$ has a natural \emph{projective model structure} \cite{book:Hirschhorn_Model_Categories} induced by the model structure on $\top$.
The fibrations are object-wise fibrations, the weak equivalences are object-wise weak equivalences, and
the cofibrations are maps which satisfy the right lifting property with respect to trivial fibrations.

\begin{definition}
	\label{def:analytic_approximation}
	For a functor $ F$ in $\psh(\man)$, the \emph{analytic approximation} to $F$ is the functor $\T_\infty F$ in $\psh(\man)$ defined as the right derived Kan extension of $ F|_{\disc}$ along the inclusion $ \disc \hookrightarrow \man$.
	\begin{align*}
		{\xymatrix{
		\disc \ar^{F|_{\disc}}[rr] \ar[d] &  & \top \\
		\man \ar@{-->}_{\T_\infty F}[urr]
		}}
	\end{align*}
	More explicitly, for a manifold $ M$ in $\man$,
	\begin{align*}
		 & \T_\infty F(M)
		:= \Hom _{\psh(\disc)}(Q\emb(-,M),F)
	\end{align*}
	where $Q\emb(-,M)$ is the cofibrant replacement of $ \emb(-,M)$ in $\psh(\man)$.
\end{definition}

\begin{definition}
	We say that a functor $ F$ in $\psh(\man)$ is \textit{analytic} if the natural map
	\begin{align*}
		\xymatrix{
		F \ar[r]^-{\simeq} & \T_\infty F
		}
	\end{align*}is a weak homotopy equivalence.
\end{definition}

\begin{example}
	\label{example:Analytic_functors}
	The following examples of analytic functors will be of use to us in the later sections.
	\begin{enumerate}
		\item By the formal properties of Kan extensions, it follows that
		      \begin{align*}
			      \T _ \infty \T_\infty F \simeq \T_\infty F
		      \end{align*}
		      for any functor $ F$ in $\psh(\man)$. Hence, an analytic approximation $ \T_\infty F$ is itself always analytic.

		\item
		      For a topological space $X$, the functor $ \maps(-,X) $ of all \emph{continuous} maps into $X$ is analytic, \cite[Example 2.4]{Weiss_Embeddings_from_}, \cite[Chapter 10.2]{book:Munson_Cubical_Homotopy_Theory}.

		\item
		      Let $ \imm(M,N)$ denote the space of immersions of $ M$ into $ N$. For $ n > m$, the functor $\imm(-,N)$ in $\psh(\man)$ is analytic, \cite[Example 2.3]{Weiss_Embeddings_from_}.
	\end{enumerate}
\end{example}

Manifold calculus was introduced to study embedding spaces of manifolds. One of the deepest theorems in manifold calculus states the following.
\begin{theorem}[Goodwillie--Weiss \cite{Goodwillie_Weiss_Embeddings_from_}, Goodwillie--Klein \cite{Goodwillie_Klein_Multiple_disjunction_}]
	\label{theorem:Goodwillie_Weiss_theorem}
	If $ n - m \ge 2$, then the functor $ \emb(-,N)$ in $\psh(\man)$ is analytic.
\end{theorem}

\section{Directed Immersions and Embeddings}
\label{sec:directed_immersions_and_embeddings}
In this section, we recall the notions of directed immersions and embeddings and the corresponding $h$-principles as defined in \cite{book:Gromov_Partial_Differential_Relations},
\cite{book:Eliashberg_h_principle},
\cite{book:Spring_Convex_Integration_Theory}.

The homotopy principle ($h$-principle) is a very general method for reducing problems from geometry to homotopy theory.
The $h$-principle first appeared in
the Whitney--Graustein Theorem~\cite{Whitney_On_regular_closed_curves_in_the_plane},
the Smale--Hirsch Immersion Theorem~\cite{Smale_Classification_of_Immersions_of_Spheres_},
and the Nash--Kuipers Embedding Theorem~\cite{Nash_C1_isometric_imbeddings},
\cite{Nash_The_imbedding_problem_for_Riemannian_manifolds}, \cite{Kuipers_On_C1_isometric_imbeddings_I}.
These theorems provide homotopy theoretic descriptions of the spaces of immersions and embeddings of manifolds in Euclidean spaces.
Their proof techniques were vastly generalized by Gromov~\cite{book:Gromov_Partial_Differential_Relations}
to a general framework for finding solutions to underdetermined differential relations.

The following is a very simplified idea of the general $h$-principle; for details see~\cite{book:Eliashberg_h_principle}. Suppose we want to find a smooth map $e:M \rightarrow N$ satisfying a first order differential relation
\begin{align}
  \label{eq:PDR}
  P(e, De) = 0,
\end{align}
where $De$ is the differential of $e$.
We consider instead the \emph{decoupled} differential relation
\begin{align}
  \label{eq:PDR1}
  P(e, F) &= 0,
\end{align}
where $F$ is any bundle map from $TM$ to $TN$.
The solutions of equation~\eqref{eq:PDR} are called \emph{genuine} solutions and solutions of equation~\eqref{eq:PDR1} are called \emph{formal} solutions.
A formal solution $(e,F)$ is a genuine solution if $F = De$.
It is usually possible to describe the space of formal solutions as the space of sections of certain naturally occuring bundles constructed from $TM$ and $TN$ (see below).
We say that the $h$-principle holds for the partial differential relation $P$ if the space of formal solutions is weakly homotopy equivalent to the space of genuine solutions.
The weak homotopy equivalence is proven by showing that any continuously varying family of formal solutions $(e_s,F_s)$ can be perturbed to a continuously varying family of {genuine} solutions $(e'_s, F'_s)$.
Thus, when the $h$-principle holds, the problem of finding the homotopy type of the space of genuine solutions is reduced to the simpler problem of finding the homotopy type of the space of formal solutions.

For a real vector space $V$, let $\gr(m,V)$ be the $m$-plane Grassmannian, i.e., the space of $m$-dimensional subspaces of $V$.
Let $\bs:\gr(m,TN) \rightarrow N$ be the \emph{$ m$-plane Grassmannian bundle} over $N$ whose fiber over a point $q$ in $N$ is $\gr(m,T_qN)$, as described by the following pullback diagram.
\begin{align*}
	\xymatrix{
      \gr(m,T_q N) \ar[r] \ar[d] & \gr(m,TN) \ar^{\bs}[d] \\
        {\ast} \ar^-{q}[r] & N}
\end{align*}
An immersion $e: M \looparrowright N$ induces a map $\gr(m,e): M \rightarrow \gr(m,TN)$ sending a point $p$ in $M $ to $De(T_p M)$.\\

\textbf{For the rest of this section, fix a manifold $N$ and a subset ${\A}$ of $\gr(m,TN)$.}

\begin{definition}
	\label{def:A_directed_Embedding}
	We say that an immersion (or an embedding) $ e: M \rightarrow N$ is \emph{$\A$-directed} if the image of $\gr(m,e)$ lies in $\A$.
\end{definition}
Denote by $ \imm_{\A}(M,N)$ and $ \emb_{\A}(M,N)$ the space of {\it $\A$-directed immersions} and {\it $\A$-directed embeddings}, respectively.

\begin{definition}
  \label{def:formalImmersion}
	A {\it formal immersion} from $M$ to $N$ is a vector bundle monomorphism from $TM$ to  $TN$.
  A formal immersion $F:TM \rightarrow TN$ is \emph{$\A$-directed} if the image of $F$ lies in $\A$.
\end{definition}
Denote the space of formal immersions and the space of formal $\A$-directed immersions by $\immf(M,N)$ and $\immfa(M,N)$, respectively.
There are natural inclusion maps $\imm(M,N) \hookrightarrow \immf(M,N)$ and $\imm_{\A}(M,N) \hookrightarrow \imm_{\A}^f(M,N)$ sending an immersion $e:M \rightarrow N$ to its differential $De:TM \rightarrow TN$.

\begin{definition}
	We say that the \emph{$h$-principle holds for $\A$-directed immersions} if the inclusion
	$\imm_{\A}(M,N) \hookrightarrow \immfa(M,N)$
	is a weak homotopy equivalence for all manifolds $M$ in $\man$.
\end{definition}

\begin{definition}
  \label{def:directedEmbedding}
 A \emph{formal $\A$-directed embedding} is a pair $(e,\gamma)$ where $e$ is an embedding in $\emb(M,N)$ and $\gamma$ is a smooth map $\gamma : [0,1] \rightarrow \maps(M,\gr(m,TN))$ satisfying \begin{enumerate}
    \item $\bs \circ \gamma(t) = e$ for all $t \in [0,1]$,
    \item $\gamma(0) = \gr(m,e)$,
    \item image of $\gamma(1)$ lies in $\A$.
  \end{enumerate}
The path $\gamma$ is called a \emph{tangential homotopy} lying over $e$.
\end{definition}
Denote the space of formal $\A$-directed embeddings by $\emb_{\A}^f(M,N)$.
There is a natural inclusion $\emb_{\A}(M,N) \hookrightarrow \emb_{\A}^f(M,N)$ which sends an embedding $e: M \hookrightarrow N$ to the constant tangential homotopy at $\gr(m,e)$ lying over $e$.

\begin{definition}
	\label{def:h_principle_directed_Embedding}
	We say that the \emph{$h$-principle holds for $\A$-directed embeddings} if the inclusion
	$\emb_{\A}(M,N) \rightarrow \emb_{\A}^f(M,N)$
	is a weak homotopy equivalence for all manifolds $M$ in $\man$.
\end{definition}


\section{Main Theorems}
\label{sec:Main_theorems}
In this section, we connect the theories of manifold calculus and the $h$-principle for directed embeddings.
For this section, fix a manifold $N$ and let $ \A$ be a subset of $\gr(m,TN)$ for which the $h$-principle holds for $\A$-directed embeddings. Further assume that the projection map $\bs:\A \rightarrow N$ is a fibration.

\begin{lemma}
  \label{lem:fibration}
  The projection map onto the first coordinate from $\embfa(M,N)$ to  $\emb(M,N)$ is a fibration.
\end{lemma}
\begin{proof}
  Let $X \hookrightarrow Y$ be a cofibration.
  From Definition~\ref{def:directedEmbedding} of $\embfa(M,N)$ it follows that the lifting problem for the square
  \begin{align*}
    \xymatrix{
    X \ar[r] \ar[d] & \embfa(M,N) \ar[d]\\
    Y \ar[r] \ar@{-->}[ru]& \emb(M,N)
    }
  \end{align*}
  is equivalent to producing compatible lifts for the following two squares.
  \begin{align*}
    \xymatrix{
    (X \times \{1\}) \times M
      \ar[r] \ar[d]
      & \A \ar[d]\\
    (Y \times \{1\}) \times M
      \ar[r] \ar@{-->}[ru]
      & N
    \\
    (X \times [0,1] \cup Y \times \{0,1\}) \times M
      \ar[r] \ar[d]
      & \gr(m,TN) \ar[d]\\
    (Y \times [0,1]) \times M
      \ar[r] \ar@{-->}[ru]
      & N
    }
  \end{align*}
  There exist compatible solutions for these lifting problems as the inclusions
  \begin{align*}
    (X \times \{1\}) &\longrightarrow (Y \times \{1\}) \\
    (X \times [0,1] \cup Y \times \{0,1\}) \times M &\longrightarrow (Y \times [0,1]) \times M
  \end{align*}
  are cofibrations and the maps $\A \rightarrow N$ and $\gr(m,TN) \rightarrow N$ are fibrations.
\end{proof}

\begin{theorem}
	\label{theorem:Main_Pullback_Diagram}
	For every manifold $ M$ in $\man$, the following commuting square is a homotopy pullback square,
  \begin{align}
		\xymatrix{
		\emb_{\A}(M,N) \ar^-{\gr(m,-)}[rr] \ar[d] && \maps(M,\A) \ar[d]  \\
		\emb(M,N) \ar^-{\gr(m,-)}[rr]            && \maps(M,\gr(m,TN))
		}
	\end{align}
  where the vertical maps are inclusions.
\end{theorem}
\begin{proof}
  Fix an embedding $e: M \hookrightarrow N$.
  We will show that the homotopy fiber
  $\hofib_e(\emb_{\A}(M,N) \rightarrow \emb(M,N))$ is weakly homotopy equivalent to homotopy fiber $\hofib_{\gr(m,e)}(\maps(M,\A) \rightarrow \maps(M,\gr(m,TN)))$.

  As the $h$-principle holds for $\A$-directed embeddings, the inclusion of $\emb_{\A}(M,N)$ into $\embfa(M,N)$ is a weak homotopy equivalence.
  The projection map from $\emb_{\A}(M,N)$ to $\emb(M,N)$ factors through $\embfa(M,N)$ giving us a weak homotopy equivalence
  \begin{align*}
    \hofib_e(\emb_{\A}(M,N) \rightarrow \emb(M,N))
    \overset{\simeq}{\longrightarrow}
    \hofib_e(\embfa(M,N) \rightarrow \emb(M,N)).
  \end{align*}
  As shown in Lemma~\ref{lem:fibration}, the map $\embfa(M,N) \rightarrow \emb(M,N)$ is a fibration. Hence, the homotopy fiber $\hofib_e(\embfa(M,N) \rightarrow \emb(M,N))$ is simply the fiber over $e$, which is the space of tangential homotopies lying over $e$ ending in $\A$:
  \begin{align}
        \label{eq:homotopyFiberDescription}
        \begin{split}
          \{ \gamma: [0,1] \rightarrow \maps(M,\gr(m,TN)) \mid  &\bs \circ \gamma = e, \\
          &\gamma(0) = \gr(m,e), \\
          &\gamma(1) \in \A \}.
        \end{split}
  \end{align}

  The homotopy fiber of the map $\maps(M,\A) \rightarrow \maps(M,\gr(m,TN))$ over $\gr(m,e)$ is the total homotopy fiber \cite[Chapter 3.4]{book:Munson_Cubical_Homotopy_Theory} of the following commutative square over $e$.
  \begin{align*}
    \xymatrix{
    \maps(M,\A) \ar[r] \ar[d]_-{\bs_1}
    &
      \maps(M,\gr(m,TN)) \ar[d]^-{\bs_2} \\
      \maps(M,N) \ar@{=}[r]
    &
      \maps(M,N)
    }
  \end{align*}
  As $\A \rightarrow N$ is a fibration, so are the vertical maps.
  The total homotopy fiber is then the homotopy fiber of the map $\bs_1^{-1}(e) \rightarrow \bs_2^{-1}(e)$ which is precisely the space described in \eqref{eq:homotopyFiberDescription}.
\end{proof}

\begin{corollary}
	If further the $h$-principle holds for $\A$-directed immersions, then for every manifold $ M$ in $\man$, the following commuting square is a homotopy pullback square,
  \begin{align}
    \label{eq:embeddings_immersions_pullback}
		\xymatrix{
		\emb_{\A}(M,N) \ar[r] \ar[d] & \imm_{\A}(M,N) \ar[d]  \\
		\emb(M,N) \ar[r]            & \imm(M,N)
		}
	\end{align}
  where all the maps are inclusions.
\end{corollary}

\begin{proof}
  We can extend the above square as follows.
  \begin{align}
		\xymatrix{
		\emb_{\A}(M,N) \ar[r] \ar[d]
      & \imm_{\A}(M,N)  \ar[d] \ar[rr]^-{\gr(m,-)}
      && \maps(M,\A) \ar[d]\\
		\emb(M,N) \ar[r]
      & \imm(M,N) \ar[rr]^-{\gr(m,-)}
      && \maps(M,\gr(m,TN))
		}
	\end{align}
  In Theorem~\ref{theorem:Main_Pullback_Diagram} we have shown that the larger square is a homotopy pullback square.
  By 2-out-of-3 property of homotopy pullbacks, it suffices to show that that the right square a homotopy pullback square.
  This square factors through the space of formal immersions as follows.
  \begin{align}
    \label{eq:immersionsSquare}
    \xymatrix{
    \imm_{\A}(M,N) \ar[r] \ar[d]
      & \immfa(M,N)  \ar[d] \ar[r]
      & \maps(M,\A) \ar[d]\\
    \imm(M,N) \ar[r]
      & \mathrm{Imm_{\gr(m,TN)}^f}(M,N) \ar[r]
      & \maps(M,\gr(m,TN))
    }
  \end{align}

  The map $\imm_{\A}(M,N) \rightarrow \immfa(M,N)$ is a weak homotopy equivalence as the $h$-principle holds for $\A$-directed immersions.
  The space $\mathrm{Imm_{\gr(m,TN)}^f}(M,N)$ is simply the space of vector bundle monomorphisms $TM \rightarrow TN$. Because of our assumption that $m < n$, the left lower horizontal map is a weak homotopy equivalence by the Smale--Hisrch Theorem~\cite{Smale_Classification_of_Immersions_of_Spheres_}, \cite{book:Eliashberg_h_principle}.
  Hence, the left square in \eqref{eq:immersionsSquare} is a homotopy pullback square.

  It follows from Definition~\ref{def:formalImmersion} that the maps $\immfa(M,N) \rightarrow \maps(M,\A)$ and $\mathrm{Imm_{\gr(m,TN)}^f}(M,N) \rightarrow \maps(M,\gr(m,TN))$ are both principal $\GL_m(\R)$ bundles.
  Hence, the right square in \eqref{eq:immersionsSquare} is a homotopy pullback square.
\end{proof}

\begin{lemma}\label{theorem:holim_lemma}
	For a small $I$-shaped diagram of analytic functors $F: I \rightarrow \psh(\man)$,
	the homotopy limit $ \holim _{i \in I} F_i$ is also analytic.
\end{lemma}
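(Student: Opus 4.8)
The plan is to exploit the fact that the analytic approximation functor $\T_\infty$ commutes with homotopy limits, so that the property of being analytic is automatically inherited by homotopy limits; the phrase ``formal properties of Kan extensions'' is exactly the point. Recall first that for every $F \in \psh(\man^G)$ there is a natural transformation $\eta_F : F \to \T_\infty F$ (the unit of the derived Kan extension), and that $F$ is analytic precisely when $\eta_F$ is an objectwise weak equivalence. Given the $I$-shaped diagram $F : I \to \psh(\man^G)$, write $G := \holim_{i \in I} F_i$, noting that homotopy limits in the projective model structure on $\psh(\man^G)$ are computed objectwise in $\top$. Applying $\eta$ to each $F_i$ and taking $\holim_I$ produces a map $\holim_i \eta_{F_i} : G \to \holim_i \T_\infty F_i$, and since each $\eta_{F_i}$ is an objectwise equivalence and homotopy limits preserve objectwise equivalences, this map is an objectwise equivalence.

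Next I would show that the canonical comparison map $\T_\infty G = \T_\infty\!\big(\holim_i F_i\big) \to \holim_i \T_\infty F_i$ (assembled from the maps $\T_\infty G \to \T_\infty F_i$ induced by the structure maps $G \to F_i$) is an equivalence. Using the explicit description $\T_\infty H(M) \simeq \holim_{U \in \disc(M)} H(U)$ together with the objectwise computation $G(U) \simeq \holim_i F_i(U)$, this reduces for each $M \in \man^G$ to the Fubini-type interchange
\[
\holim_{U \in \disc(M)} \holim_{i \in I} F_i(U) \;\simeq\; \holim_{i \in I} \holim_{U \in \disc(M)} F_i(U),
\]
both sides computing the homotopy limit over $\disc(M) \times I$. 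Alternatively, one may argue abstractly that $\T_\infty$ is the composite of the restriction functor along $\disc \hookrightarrow \man^G$, which preserves (objectwise) homotopy limits, with the derived right Kan extension, which is a right adjoint at the level of homotopy categories and hence preserves homotopy limits.

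Finally I would assemble the commuting triangle
\[
\vcenter{\xymatrix{
G \ar[r]^-{\eta_G} \ar[dr]_-{\holim_i \eta_{F_i}} & \T_\infty G \ar[d] \\
& \holim_i \T_\infty F_i
}}
\]
which commutes by naturality of $\eta$ in its argument (applied to the structure maps $G \to F_i$) together with the definition of the comparison map. The diagonal is an equivalence by the first step and the vertical map by the second, so $\eta_G$ is an equivalence and $G = \holim_i F_i$ is analytic.

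The main obstacle is the middle step: justifying cleanly that $\T_\infty$ commutes with homotopy limits. Either one keeps honest track of the model-categorical bookkeeping (cofibrant replacements $Q\emb(-,M)$ on the one side, fibrancy on the other, and the fact that $\Hom_{\psh(\disc)}(Q\emb(-,M),-)$ carries objectwise homotopy limits to homotopy limits), or one passes to the statement that a derived right Kan extension, being a right adjoint of $\infty$-categories, preserves all limits. Everything else is formal.
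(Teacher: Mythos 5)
Your proposal is correct and follows essentially the same route as the paper: both arguments hinge on the fact that $\T_\infty$ commutes with homotopy limits, after which analyticity of each $F_i$ finishes the argument. The paper justifies the commutation by writing $\T_\infty H(M) = \Hom_{\psh(\disc)}(Q\emb(-,M),H)$ and using that enriched hom out of the fixed cofibrant object preserves $\holim$ — the same content as your Fubini/right-adjoint step — while your extra care that the resulting equivalence is realized by the unit $\eta_G$ is a point the paper leaves implicit.
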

\begin{proof}
	For a diagram of analytic functors $ F: I \rightarrow \psh(\man)$ we have,
	\begin{align}
		(\mathcal{T}_\infty \holim _ I F_ i)(M)
    \label{eq:eq1}
		 & = \Hom _{\psh(\disc)}(Q\emb(-,M),\holim _ I F_ i)      \\
		\label{eq:eq2}
		 & \simeq \holim _ I \Hom _{\psh(\disc)}(Q\emb(-,M),F_ i) \\
     \label{eq:eq3}
		 & = \holim _ I (\mathcal{T}_\infty F_i)(M)         \\
		\label{eq:eq4}
		 & \simeq \holim _ I F_i(M)
	\end{align}
	where the equalities in \eqref{eq:eq1}  and \eqref{eq:eq3}  are by the definition of $\mathcal{T}_\infty$, the homotopy equivalence in \eqref{eq:eq2} follows from the universal property of enriched $ \holim$ and the homotopy equivalence in \eqref{eq:eq4} follows from the analyticity of $ F_i$.
\end{proof}

\begin{theorem}
	\label{theorem:Main_theorem}
	Let $ n - m \ge 3$. Let $\A$ be a subset of $\gr(m,TN)$ for which the $h$-principle holds for $\A$-directed embeddings. Further suppose that the projection map $\bs: \A \rightarrow N$ is a fibration.
	\begin{enumerate}
		\item The functor $ \emb_{\A}(-,N)$ in $\psh(\man)$ is analytic, i.e., the natural map
		      \begin{align*}
			      \xymatrix{
			      \emb_{\A}(M,N) \ar[r]^-{\simeq} & \mathcal{T}_\infty \emb _{\A} (M,N)
			      }
		      \end{align*}
		      is a weak homotopy equivalence for all manifolds $M$ in $\man$.
		\item Let $\A'$ be a subset of $\A$ such that the projection map $\bs:\A' \rightarrow N$ is a fibration and the inclusion $ \A' \hookrightarrow \A$ is a weak homotopy equivalence.
    Then there exists a weak homotopy equivalence
		      \begin{align*}
			      \T_\infty \emb_{\A'}(M,N) & \simeq \emb_{\A}(M,N)
		      \end{align*}
		      for all manifolds $M$ in $\man$.
	\end{enumerate}
\end{theorem}
\begin{proof}
	For $n - m \ge 3$, as mentioned in Example \ref{example:Analytic_functors} and Theorem~\ref{theorem:Goodwillie_Weiss_theorem} the three functors \begin{align*}
		\maps(-,\A)\mbox{, }  \maps(-,\gr(m,TN)) \mbox{, } \emb(-,N)
	\end{align*}
	are analytic. By applying Lemma~\ref{theorem:holim_lemma} to the homotopy pullback square from Theorem~\ref{theorem:Main_Pullback_Diagram}
	\begin{align*}
		\xymatrix{
		\emb_{\A}(-,N) \ar[r] \ar[d] & \maps(-,\A) \ar[d]  \\
		\emb(-,N) \ar[r]             & \maps(-,\gr(m,TN))
		}
	\end{align*}
	we get the analyticity of $\emb_{\A}(-,N)$.

	As $\A' \simeq \A$, when $M$ is a manifold in $\disc$, the inclusion of $\emb_{\A'}(M,N)$ into $\emb_{\A}(M,N)$ is a weak homotopy equivalence. As $\T_\infty$ is defined to be the Kan extension along the inclusion $\disc \hookrightarrow \man$, there is a natural homotopy equivalence $\T_\infty \emb_{\A'}(M,N) \xrightarrow{\simeq} \T_\infty \emb_{\A}(M,N)$.
	The second part of the theorem follows from the analyticity of $\emb_{\A}(-,N)$.
\end{proof}


\section{Lagrangian Embeddings}
\label{sec:Lagrangian_embeddings}
In this section, we use the above framework to study the Lagrangian embeddings functor.

A {\emph symplectic manifold} is a pair $(N,\omega)$ where $N$ is a smooth manifold and $\omega$ is a closed, non-degenerate, differential 2-form on $N$.
Existence of a symplectic form on $ N$ forces it to be even dimensional.
For every point $q$ in $N$, the 2-form $\omega$ restricts to a bilinear form on the tangent space $T_q N$. We say that a subspace $V$ of $T_q M$ is \emph{Lagrangian} if $\dim V = n/2$ and $\omega|_V \equiv 0$.
A submanifold $M$ of $ N$ is called \emph{Lagrangian} if  $ \omega|_{M} \equiv 0$ and $ \dim {M} = n/2$.
This is equivalent to requiring that for every point $p$ in $M$, the tangent space $T_p M$ is a Lagrangian subspace of $T_p N$.

For the rest of this section fix a symplectic manifold $(N,\omega)$ of dimension $n = 2m$.
\begin{definition}
	Let $\mathrm{Lag}$ be the subset of $\gr(m,TN)$ whose fiber over a point $q$ in $N$ is the space of Lagrangian subspaces of $T_q N$.
\end{definition}
Note that $e: M \hookrightarrow N$ is a $\mathrm{Lag}$-directed embedding if and only if $e(M)$ is a Lagrangian submanifold of $M$, so that $\embl(M,N)$ is the space of Lagrangian embeddings of $M$ into $N$.

The $h$-principle does not hold for $\mathrm{Lag}$-directed immersions and embeddings. Instead, $\mathrm{Lag}$ is homotopy equivalent to a larger space for which $h$-principles do hold.

\begin{definition}
	An \textit{almost complex structure} $J$ on $ N$ is a vector bundle isomorphism $ J :T N \rightarrow T N$ satisfying $ J^2 = -1$. This is equivalent to saying that the structure group of $TN$ can be reduced to $ \GL_{m}(\C)$.
\end{definition}

\begin{definition}
	An almost complex structure $J$ on a symplectic manifold $(N,\omega)$ is said to be \emph{compatible} with the symplectic structure if the following two conditions are satisfied.
	\begin{enumerate}
		\item $\omega(-,J-)$ defines a Riemannian metric on $N$,
		\item $\omega(J-,J-) = \omega(-,-)$.
	\end{enumerate}
\end{definition}

On every symplectic manifold $ N$, the space of all compatible almost complex structures is non-empty and contractible \cite[Chapter 13]{book:Ana_Cannas_da_Silva}.
For the rest of this section, we will fix a compatible almost complex structure $ J$ on $(N,\omega)$.

\begin{definition}
  For a point $q$ of $N$, a subspace $V$ of $T_q N$ is called \emph{totally real} if $\dim V = n/2$ and $ V + JV = T_q N$.
  A submanifold $M$ of $N$ is called \emph{totally real} if for each point $p$ in $M$, $T_p M$ is a totally real subspace of $T_p N$.
\end{definition}

Denote by $\mathrm{TR}$ the subset of $\gr(m,TN)$ consisting of totally real subspaces of $TN$.
$\embtr(M,N)$ is the space of totally real embeddings of $M$ into $N$.
Compatibility of $J$ with $\omega$ implies that there is a natural inclusion $ \mathrm{Lag} \subseteq \mathrm{TR}$ and hence $\embl(M,N) \subseteq \embtr(M,N)$.

\begin{proposition}
	\label{theorem:equiv_lag_tr}
	The inclusion $ \mathrm{Lag} \hookrightarrow \mathrm{TR}$ is a homotopy equivalence.
\end{proposition}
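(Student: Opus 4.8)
The plan is to show that the inclusion $\mathrm{Lag}_p \hookrightarrow \mathrm{TR}_p$ is a homotopy equivalence on each fiber over $p \in N$, and then to conclude that the inclusion of total spaces is a homotopy equivalence since both are fiber bundles over $N$ with the same base and the inclusion is a map of bundles. So the whole problem reduces to a fiberwise, purely linear-algebraic statement: fixing a $2m$-dimensional real vector space $W = T_p N$ with a linear complex structure $J$ and a compatible symplectic form $\omega$ (so that $g = \omega(-,J-)$ is an inner product and $J$ is $g$-orthogonal), one must show that the space $\mathrm{Lag}(W)$ of Lagrangian $m$-planes is a deformation retract of — or at least homotopy equivalent to — the space $\mathrm{TR}(W)$ of totally real $m$-planes.

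The cleanest route is to identify both spaces homogeneously. The unitary group $U(m)$ (the automorphisms of $W$ preserving $g$, $J$, and hence $\omega$) acts on $\mathrm{Lag}(W)$ transitively with stabilizer $O(m)$, giving $\mathrm{Lag}(W) \cong U(m)/O(m)$. For totally real planes, first I would observe that $\GL_m(\C)$ (the complex-linear automorphisms of $W$) acts transitively on $\mathrm{TR}(W)$: a subspace $V$ is totally real precisely when $V \oplus JV = W$, i.e. when a real basis of $V$ is a complex basis of $W$, and $\GL_m(\C)$ acts transitively on complex bases; the stabilizer of the standard totally real $\R^m \subseteq \C^m$ is $\GL_m(\R)$. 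Hence $\mathrm{TR}(W) \cong \GL_m(\C)/\GL_m(\R)$. Now the inclusion $\mathrm{Lag}(W) \hookrightarrow \mathrm{TR}(W)$ is exactly the map $U(m)/O(m) \to \GL_m(\C)/\GL_m(\R)$ induced by the inclusions $U(m) \hookrightarrow \GL_m(\C)$ and $O(m) \hookrightarrow \GL_m(\R)$. Both of these inclusions are homotopy equivalences — they are the maximal compact subgroups, via Gram–Schmidt / the polar decomposition $\GL_m(\C) \simeq U(m) \times (\text{positive Hermitian})$ and likewise $\GL_m(\R) \simeq O(m) \times (\text{positive symmetric})$, the Euclidean factors being contractible. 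A short diagram chase with the long exact sequences of the two fibrations $O(m) \to U(m) \to \mathrm{Lag}(W)$ and $\GL_m(\R) \to \GL_m(\C) \to \mathrm{TR}(W)$, together with the five lemma, then yields that $\mathrm{Lag}(W) \to \mathrm{TR}(W)$ is a weak equivalence, hence (these being manifolds) a homotopy equivalence.

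Alternatively, and perhaps more transparently, I would exhibit an explicit fiberwise deformation retraction: given a totally real plane $V$, apply the Gram–Schmidt process with respect to $g$ to straighten $V$ toward a Lagrangian plane, checking that $\omega|_V$ is driven continuously to $0$; concretely, write $V$ as the graph of a symmetric-type operator over a Lagrangian subspace and scale the offending part to zero. Either way, once the fiberwise equivalence is established with a construction that is manifestly continuous and natural in $p$, the map on total spaces is a fiberwise homotopy equivalence over $N$, and a fiberwise homotopy equivalence between fibrations over the same base is a homotopy equivalence (e.g. by the relative version of Whitehead's theorem, or by Dold's theorem on fiber homotopy equivalences).

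The main obstacle I anticipate is purely bookkeeping: verifying that the chosen retraction (or the homogeneous-space identification) depends smoothly and naturally on the base point $p$, so that it genuinely assembles into a fiberwise equivalence of bundles over $N$ rather than just a fiberwise statement. The compatibility of $J$ with $\omega$ is exactly what makes this work — it is needed both to get the metric $g$ that powers Gram–Schmidt and to guarantee that $U(m)$ (rather than some noncompact group) acts on the Lagrangian Grassmannian — so I would be careful to invoke it at the right points. No deep input is required beyond the polar/Gram–Schmidt decompositions and the standard fact that a fiberwise homotopy equivalence of fibrations is a homotopy equivalence.
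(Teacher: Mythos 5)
Your proposal is correct and takes essentially the same route as the paper: the paper likewise identifies the fibers as $\mathrm{U}(m)/\mathrm{O}(m)$ and $\GL_m(\C)/\GL_m(\R)$ and invokes the polar decomposition to see that the inclusion of homogeneous spaces is an equivalence. Your write-up is somewhat more careful than the paper's about the five-lemma step and the passage from a fiberwise equivalence to an equivalence of total spaces (via Dold's theorem), points the paper leaves implicit.
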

\begin{proof}
  For a point $q$ in $N$, we can use the almost complex structure to identify the tangent space $T_q N$ with $\C^m$.
  Under this identification, the space of all totally real subspaces equals $\GL(m,\C)/\GL(m,\R)$.

  Arnol'd \cite{Arnold_Characteristic_class_entering_in_quantization_conditions} has shown that the space of Lagrangian subspaces of $T_q N$ is homeomorphic to $U(m)/O(m)$.

  \begin{align*}
    \xymatrix{
    \GL(m,\C)/\GL(m,\R) \ar[r]
    & \mathrm{TR} \ar[d]
    \\
    & N
    }
    &&
    \xymatrix{
    U(m)/O(m) \ar[r]
    & \mathrm{Lag} \ar[d]
    \\
    & N
    }
  \end{align*}

  The result follows from the fact that the natural inclusion of $U(m)/O(m)$ into $\GL(m,\C)/\GL(m,\R)$ is a homotopy equivalence.
\end{proof}

In \cite[Section 2.4.5]{book:Gromov_Partial_Differential_Relations}, Gromov proved the following using convex integration.
\begin{theorem}[Gromov]
  \label{theorem:totally_real_embeddings}
  Let $N$ be an almost complex manifold and let $\mathrm{TR}$ be the subset of $\gr(m,TN)$ consisting of totally real subspaces of $TN$.
  The $h$-principle holds for $\mathrm{TR}$-directed embeddings.
\end{theorem}

 Theorem~\ref{theorem:totally_real_embeddings}, Proposition \ref{theorem:equiv_lag_tr}, and Theorem~\ref{theorem:Main_theorem} give us the following result.

\begin{theorem}
	\label{theorem:lagrangian_embeddings}
	Let $ n - m \ge 3$, $ n=2m$ and let $(N,\omega)$ be a symplectic manifold of dimension $n$ with a compatible almost complex structure $J$.
  Then for all manifolds $M$ in $\man$ the analytic approximation of $ \embl(M,N)$ is weakly homotopy equivalent to $ \embtr(M,N)$ via a zig-zag of maps,
	\begin{align*}
		\xymatrix{
		\embtr(M,N) \ar[r]^-{\simeq}
			& \T_\infty \embtr(M,N)
				& \T_\infty\embl(M,N). \ar_-{\simeq}[l]
		}
	\end{align*}
\end{theorem}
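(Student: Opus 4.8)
The plan is to combine the three ingredients cited just before the statement into a short zig-zag of homotopy equivalences. First I would invoke Theorem~\ref{theorem:totally_real_embeddings}: the space $\mathrm{TR} \subseteq \gr_m(N)$ satisfies the $h$-principles for directed immersions and directed embeddings for all $M$ (here $G = \GL_m(\R)$, so $\gr^G_m(N) = \gr_m(N)$ and there is no $G$-structure constraint on the source). Since $\mathrm{TR} \to N$ is a subfibration of $\gr_m(N) \to N$, Theorem~\ref{theorem:Main_theorem}(1) applies with $\A = \mathrm{TR}$ and $n - m > 2$, yielding that $\embtr(-,N)$ is analytic, i.e. the natural map $\embtr(M,N) \xrightarrow{\simeq} \T_\infty \embtr(M,N)$ is a homotopy equivalence for all $M \in \man$. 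This gives the left arrow of the zig-zag.

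For the right arrow, I would apply Theorem~\ref{theorem:Main_theorem}(2) with $\A = \mathrm{TR}$ and $\A' = \mathrm{Lag}$. We need $\mathrm{Lag} \subseteq \mathrm{TR}$, which holds because compatibility of $J$ with $\omega$ forces every Lagrangian plane to be totally real; we need $\mathrm{Lag} \to N$ to be a fibration, which it is, being the Lagrangian Grassmannian bundle; and we need $\mathrm{Lag} \simeq \mathrm{TR}$, which is exactly Proposition~\ref{theorem:equiv_lag_tr}. Theorem~\ref{theorem:Main_theorem}(2) then produces a homotopy equivalence $\T_\infty \embl(M,N) \xrightarrow{\simeq} \T_\infty \embtr(M,N)$ (its proof shows the inclusion $\embl(-,N) \hookrightarrow \embtr(-,N)$ is an object-wise equivalence on $\disc$, hence induces an equivalence of right derived Kan extensions), which is the right arrow.

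Splicing these together gives the claimed zig-zag
\begin{align*}
	\xymatrix{
	\embtr(M,N) \ar[r]^-{\simeq}
		& \T_\infty \embtr(M,N)
			& \T_\infty\embl(M,N) \ar_-{\simeq}[l]
	}
\end{align*}
and hence a homotopy equivalence between $\T_\infty \embl(M,N)$ and $\embtr(M,N)$. There is essentially no hard analytic step left here: all the real work has already been carried out in Theorem~\ref{theorem:Main_theorem}, in Gromov's $h$-principle (Theorem~\ref{theorem:totally_real_embeddings}), and in the Lagrangian-vs-totally-real comparison (Proposition~\ref{theorem:equiv_lag_tr}). The only thing to be careful about is bookkeeping: checking that the hypotheses of Theorem~\ref{theorem:Main_theorem} are met on the nose (subfibration, $\mathrm{Lag}$ fibrant over $N$, the dimension inequality $n - m > 2$ together with $n = 2m$), and noting that the comparison is genuinely only a zig-zag rather than a single map, since $\embl(M,N)$ itself need not be analytic — only its analytic approximation is pinned down, via $\embtr(M,N)$.
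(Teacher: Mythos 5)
Your proposal is correct and follows exactly the paper's route: the paper derives this theorem directly from Theorem \ref{theorem:totally_real_embeddings}, Proposition \ref{theorem:equiv_lag_tr}, and the two parts of Theorem \ref{theorem:Main_theorem}, applied with $\A = \mathrm{TR}$ and $\A' = \mathrm{Lag}$ just as you do. Your write-up merely makes the hypothesis-checking explicit, which the paper leaves implicit.
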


	As a consequence of this, we can see that in general $\embl(-,N)$ is not analytic.
	For example, there are no simply connected Lagrangian submanifolds of $(\C^n, \omega)$, where $\omega$ is the standard symplectic structure \cite{Gromov_Pseudo_holomorphic_curves_} but $S^3$ can be embedded in $\C^3$ as a totally real manifold \cite[Section 2.4.5]{book:Gromov_Partial_Differential_Relations}.
	Thus $\embl(S^3,\C^3)$ is empty but $\T_\infty \embl(S^3,\C^3)$, which is weakly homotopy equivalent to $\embtr(S^3,\C^3)$, is not.

\begin{corollary}
	$\embl(-,\C^3)$ is not analytic on the category of 3 dimensional smooth manifolds.
\end{corollary}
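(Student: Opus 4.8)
The plan is to exhibit a single $3$-manifold $M$ on which $\embl(M,\C^3)$ and its analytic approximation disagree, which immediately shows that the natural map $\embl(-,\C^3) \to \T_\infty\embl(-,\C^3)$ is not a homotopy equivalence, i.e. the functor is not analytic. The natural candidate, already flagged in the discussion preceding the corollary, is $M = S^3$. First I would invoke the result of Gromov \cite{Gromov_Pseudo_holomorphic_curves_} that there are no compact simply connected Lagrangian submanifolds of $(\C^3,\omega)$ with its standard symplectic form; since $S^3$ is compact and simply connected, this forces $\embl(S^3,\C^3) = \varnothing$. In particular $\embl(S^3,\C^3)$ is not even nonempty, let alone weakly contractible.

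Next I would show the analytic approximation is nonempty. By Theorem \ref{theorem:lagrangian_embeddings} (applicable here since $n = 6$, $m = 3$, $n - m = 3 > 2$, and $\C^3$ with the standard symplectic form admits a compatible almost complex structure, namely the standard one), there is a zig-zag of homotopy equivalences relating $\T_\infty\embl(S^3,\C^3)$ to $\embtr(S^3,\C^3)$. So it suffices to observe that $\embtr(S^3,\C^3) \neq \varnothing$: this is Gromov's totally real embedding of $S^3$ in $\C^3$ from \cite{book:Gromov_Partial_Differential_Relations}. Hence $\T_\infty\embl(S^3,\C^3) \simeq \embtr(S^3,\C^3)$ is nonempty while $\embl(S^3,\C^3)$ is empty, so the two cannot be homotopy equivalent, and therefore $\embl(-,\C^3)$ fails to be analytic on $\man = \man^{\GL_3(\R)}$, the category of $3$-dimensional smooth manifolds.

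There is essentially no hard step here — the corollary is a direct packaging of the two cited geometric facts together with Theorem \ref{theorem:lagrangian_embeddings} — but the one point that needs a little care is making sure the hypotheses of Theorem \ref{theorem:lagrangian_embeddings} are genuinely met for the pair $(S^3, \C^3)$: dimensions, codimension bound, and the presence of a compatible almost complex structure on $\C^3$. One should also note that $S^3$ is closed (hence a legitimate object of $\man^G$ under the paper's conventions) so that both functors are defined on it. Given all that, the proof is two sentences: $\embl(S^3,\C^3) = \varnothing$ by \cite{Gromov_Pseudo_holomorphic_curves_}, while $\T_\infty\embl(S^3,\C^3) \simeq \embtr(S^3,\C^3) \neq \varnothing$ by Theorem \ref{theorem:lagrangian_embeddings} and \cite{book:Gromov_Partial_Differential_Relations}, so the comparison map is not an equivalence.
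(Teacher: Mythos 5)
Your argument is exactly the paper's: take $M = S^3$, use Gromov's nonexistence of simply connected Lagrangians in $(\C^3,\omega)$ to get $\embl(S^3,\C^3) = \varnothing$, and use Theorem \ref{theorem:lagrangian_embeddings} together with Gromov's totally real embedding of $S^3$ in $\C^3$ to see that $\T_\infty\embl(S^3,\C^3) \simeq \embtr(S^3,\C^3)$ is nonempty. The proposal is correct and matches the paper's proof, with the added (welcome) care of checking the hypotheses of Theorem \ref{theorem:lagrangian_embeddings}.
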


\begin{remark}
	Computations of $ \pi_i(\embtr(M,\C^m))$ for $i = 0,1$ for some $M$ can be found in \cite{Audin_totally_real_embeddings}, \cite{Borrelli_On_totally_real_}.
  We hope to use manifold calculus to compute higher homotopy groups.
\end{remark}

\subsection{Isotropic Embeddings}
A stronger result is true for isotropic embeddings.
An $m$-dimensional submanifold $M$ of a symplectic manifold $(N, \omega)$ is called \emph{isotropic} if  $ \omega|_{M} \equiv 0$.
Isotropic submanifolds necessarily have dimension $\le n/2$.
When the dimension $m = n/2$ the isotropic submanifolds are the Lagrangian submanifolds.
As before, let $\mathrm{Iso}$ be the subset of $\gr(m,TN)$ whose fiber over $q$ in $N$ is the space of $m$ dimensional subspaces of $T_q N $ on which $\omega$ vanishes, so that $e: M \hookrightarrow N$ is an $\mathrm{Iso}$-directed embedding if and only if $e(M)$ is an isotropic submanifold of $M$.
By the Darboux theorem~\cite{book:Ana_Cannas_da_Silva}, every point $q$ in $N$ has an open neighborhood that is symplectomorphic to an open subset of $\R^{n}$ with the standard symplectic form.
Hence, $\mathrm{Iso} \rightarrow N$ is a fiber bundle.

In~\cite[Section 12.4]{book:Eliashberg_h_principle} Eliashberg--Mishachev prove the following theorem using the method of holonomic approximation.

\begin{theorem}[Eliashberg--Mishachev]
  Let $(N, \omega)$ be a symplectic manifold of dimension $n$ and let $\mathrm{Iso}$ be the subset of $\gr(m,TN)$ consisting of isotropic subspaces of $TN$.
  If $m < n/2$, then the $h$-principle holds for $\mathrm{Iso}$-directed embeddings.
\end{theorem}

A direct application of Theorem~\ref{theorem:Main_theorem} gives us the following result.

\begin{theorem}
 Let $ n - m \ge 3$, let $ n > 2m$ be an even integer and let $(N,\omega)$ be a symplectic manifold of dimension $n$. The functor $\emb_{\mathrm{Iso}}(-,N)$ defined on the category $\man$ is analytic.
\end{theorem}


\bibliographystyle{/Users/apurv/Library/texmf/tex/latex/local/halpha}
\bibliography{/Users/apurv/Library/texmf/tex/latex/local/references.bib}

\end{document}